\theoremstyle{plain} 
\newtheorem{theorem}{Theorem}[section]
\newtheorem{proposition}[theorem]{Proposition}
\theoremstyle{remark}
\newtheorem{remark}[theorem]{Remark}
\theoremstyle{definition}
\newtheorem{definition}[theorem]{Definition}
\begin{document}

\begin{frontmatter}



\title{Hamilton–Jacobi Reachability for Viability Analysis of Constrained Waste-to-Energy Systems under Adversarial Uncertainty}

\author[label1]{Achraf Bouhmady}
\author[label2]{Othman Cherkaoui Dekkaki}

\affiliation[label1]{
    organization={LAMA Laboratory, Faculty of Sciences, Mohammed V University}, 
    addressline={Avenue Ibn Battouta, B.P. 1014, Agdal}, 
    city={Rabat}, 
    postcode={10090}, 
    state={Rabat-Salé-Kénitra}, 
    country={Morocco}
}
\affiliation[label2]{
    organization={College of Computing, Mohammed VI Polytechnic University}, 
    addressline={UM6P Campus, Lot 660, Hay Moulay Rachid}, 
    city={Benguerir}, 
    postcode={43150}, 
    state={Marrakech-Safi}, 
    country={Morocco}
}

\begin{abstract}
This paper investigates the problem of maintaining the safe operation of Waste-to-Energy (WtE) systems under operational constraints and uncertain waste inflows. We model this as a robust viability problem, formulated as a zero-sum differential game between a control policy and an adversarial disturbance. Within a Hamilton-Jacobi framework, the viability kernel is characterized as the zero sublevel set of a value function satisfying a constrained Hamilton-Jacobi-Bellman (HJB) equation in the viscosity sense. This formulation provides formal guarantees for ensuring that system trajectories remain within prescribed operational limits under worst-case scenarios. Compared to existing viability studies, this work introduces a rigorous HJB-based characterization explicitly incorporating uncertainty, tailored to nonlinear WtE dynamics. A numerical scheme based on the Local Lax-Friedrichs method is employed to approximate the viability kernel. Numerical experiments illustrate how increasing inflow uncertainty significantly reduces the viability domain, shrinking the safe operating envelope. The proposed method is computationally tractable for systems of moderate dimension and offers a basis for synthesizing robust control policies, contributing to the design of resilient and sustainable WtE infrastructures.
\end{abstract}



\begin{keyword}
Hamilton-Jacobi Reachability, Waste-to-Energy, Robust Control, Viability, Sustainability, Optimal Control under Uncertainty
\end{keyword}

\end{frontmatter}




\section{Introduction}

Municipal solid waste (MSW) management has emerged as a critical global challenge due to accelerating waste generation and its environmental consequences. In 2018, global MSW production exceeded 2~billion tonnes per year and is projected to rise by approximately 70\% to reach 3.4~billion tonnes by 2050 \cite{Kaza2018}. Recent estimates confirm this upward trend, indicating that MSW volumes surpassed 2.3~billion tonnes in 2023 and are expected to approach 3.8~billion tonnes by mid-century \cite{UNEP2024}. Alarmingly, at least one-third of this waste remains mismanaged through open dumping or uncontrolled burning, causing severe environmental and public health risks \cite{UNEP2024,ISWA2024}. These developments underscore the urgency of sustainable waste management strategies aligned with circular economy and climate objectives.

Waste-to-Energy (WtE) technologies have become a cornerstone of these strategies by converting waste into usable energy products while minimizing landfill dependence \cite{Brunner2024,Kumar2020}. Incineration with energy recovery, for example, reduces waste volume by over 90\% while generating electricity and heat, significantly offsetting fossil fuel use \cite{Brunner2024}. Other technologies, such as anaerobic digestion for biogas production, complement this role. Driven by climate policy and energy security goals, global WtE capacity has expanded steadily, with the sector valued at more than \$40~billion in 2024 and projected to grow further \cite{GVR2024}. Despite this progress, ensuring the safe and reliable operation of WtE facilities—especially municipal solid waste incineration (MSWI) plants—under uncertain and highly variable conditions remains a major challenge.

The main difficulty lies in the heterogeneity of MSW feedstocks, which exhibit large fluctuations in calorific value, moisture content, and chemical composition \cite{Ding2021,Liu2023}. These variations introduce unpredictable disturbances that affect combustion stability, energy efficiency, and emissions compliance. Conventional proportional-integral-derivative (PID) controllers, widely used in practice, are typically tuned for nominal operating conditions and often fail under significant disturbances \cite{Ding2021}. Advanced methods such as Model Predictive Control (MPC) offer improvements by handling multivariable constraints and predicting future system behavior \cite{Ding2021}. However, their performance depends critically on accurate models and real-time computational resources, which may not be practical under rapid feedstock variability. Data-driven and AI-based approaches, including fuzzy logic, neural networks, and reinforcement learning, have also been investigated to enhance adaptability \cite{Liu2023}. Yet, these methods lack formal guarantees of constraint satisfaction, leaving open the fundamental question: \emph{how can WtE systems be operated safely under uncertainty with rigorous, verifiable guarantees?}

Existing optimization-based approaches have primarily focused on deterministic planning and economic objectives. Recent bioeconomic models of waste recovery apply Pontryagin’s Maximum Principle to optimize operational and investment decisions over time \cite{Dekkaki2022,Dekkaki2024}, following the tradition of renewable resource management models in economics \cite{Clark1979,Clark1990,Clark2017}. While such formulations provide valuable insights, they generally assume predictable conditions and thus offer limited robustness. In volatile environments, an optimal solution that ignores worst-case disturbances may fail to ensure operational safety.

To overcome these limitations, we adopt a fundamentally different perspective based on \emph{robust viability analysis}. Viability theory, introduced by Aubin \cite{Aubin1991}, characterizes the set of initial states—called the \emph{viability kernel}—from which system trajectories can evolve indefinitely without violating operational constraints. This concept has been successfully applied to ecological and fisheries systems for sustainability under uncertainty \cite{Mchich2005,Sanogo2012,Sanogo2013}, but its application to energy systems, and specifically WtE processes, remains unexplored. Ensuring viability under worst-case scenarios requires formal mathematical tools capable of handling nonlinear dynamics, state constraints, and bounded disturbances.

Hamilton–Jacobi (HJ) reachability analysis provides such a framework by linking viability to dynamic programming principles and differential games \cite{Evans1984,Barron1989,Barron1990}. In this setting, the robust viability problem can be modeled as a zero-sum game between the control policy and an adversarial disturbance. The viability kernel then corresponds to the zero sublevel set of a value function satisfying a Hamilton–Jacobi–Bellman (HJB) equation in the viscosity sense. This characterization guarantees that any trajectory starting within the kernel can be kept within operational limits for all future times, despite worst-case disturbances. The HJ framework is mathematically rigorous and applicable to nonlinear systems with nonconvex state constraints \cite{Altarovici2013,Assellaou2018}, offering strong theoretical guarantees often missing in heuristic or optimization-based controllers.

Although conceptually powerful, computing viability kernels through HJB equations has historically been challenging due to the curse of dimensionality. Recent advances in numerical analysis have mitigated this challenge through level-set methods \cite{Osher1988}, ordered upwind schemes \cite{Sethian2003}, anti-diffusive discretization \cite{Bokanowski2006}, and decomposition strategies \cite{Herbert}. Toolboxes such as ROC-HJ \cite{Bansal2017} have enabled real-world applications in aerospace trajectory design \cite{Altarovici2013,Assellaou2018}, robotics \cite{Bansal2017}, biomedical therapy planning \cite{Carrere2019,Wang2024}, and multi-objective control \cite{Chorobura2024}. However, \emph{no previous work has integrated these methods into the operation of WtE systems under uncertainty}, leaving a significant gap at the intersection of control theory, numerical analysis, and sustainable energy systems.

\textbf{Contributions.} This paper addresses this gap by introducing a robust viability framework for WtE systems under operational constraints and uncertain waste inflows. Our main contributions are threefold:  
(1) We formulate WtE operation as a robust viability problem under bounded uncertainty, modeled as a zero-sum differential game between the control and disturbance inputs.  
(2) We characterize the viability kernel as the zero sublevel set of a value function solving a constrained HJB equation in the viscosity sense, providing formal safety guarantees.  
(3) We develop a numerical scheme based on the Local Lax–Friedrichs discretization and validate it through simulation studies involving severe waste inflow variability, demonstrating the practical implications of the method for safe and resilient WtE operation.

The remainder of this paper is organized as follows. Section~2 presents the mathematical model of the WtE system and its operational constraints. Section~3 formalizes the robust viability problem and its characterization via the HJB framework. Section~4 describes the numerical implementation and simulation results. Finally, Section~5 summarizes the findings and outlines future research directions.

\section{Mathematical Formulation}
\label{sec:model}

We consider a continuous-time dynamic model that captures the interactions among waste accumulation, processing capacity, and energy generation in a Waste-to-Energy (WtE) system. This formulation extends the bioeconomic framework of \cite{Dekkaki2022,Dekkaki2024} by explicitly introducing operational constraints and modeling uncertainty in waste inflows. The model serves as the basis for a robust viability analysis under adversarial disturbances.

\subsection{State, Control, and Disturbance Variables}
Let the system state at time $t \ge 0$ be denoted by
\[
\mathbf{z}(t) = \big(x(t), K(t), E(t)\big) \in \mathbb{R}^3_+,
\]
where:
\begin{itemize}
    \item $x(t)$ [tons]: cumulative waste stock in storage,
    \item $K(t)$ [capacity units]: effective capital allocated to waste processing,
    \item $E(t)$ [energy units]: cumulative energy generated.
\end{itemize}
The control vector $\mathbf{u}(t) = \big(q(t), I(t)\big)$ consists of:
\begin{itemize}
    \item $q(t) \in [0,q_{\max}]$: fraction of available waste processed per unit time,
    \item $I(t) \in [0,I_{\max}]$: investment rate in processing capacity.
\end{itemize}
The set of admissible control functions is:
\begin{equation}
\mathcal{U} = \Big\{ \mathbf{u} : [0,\infty) \to [0,q_{\max}] \times [0,I_{\max}],\; \mathbf{u} \ \text{measurable} \Big\}.
\end{equation}

Uncertainty enters the system through the waste inflow rate $\eta(t)$, modeled as a measurable disturbance satisfying:
\begin{equation}
\eta(t) \in [\eta_{\min}, \eta_{\max}], \qquad \forall t \ge 0,
\end{equation}
where $0 < \eta_{\min} \le \eta_{\max} < \infty$. The admissible disturbance set is:
\begin{equation}
\mathcal{A} = \Big\{ \eta : [0,\infty) \to [\eta_{\min}, \eta_{\max}],\; \eta \ \text{measurable} \Big\}.
\end{equation}

The evolution of the system is governed by the nonlinear ODE:
\begin{equation}
\begin{cases}
\dot{x}(t) = \eta(t) - \big(\beta + q(t)K(t)\big)x(t), \\[4pt]
\dot{K}(t) = I(t) - \gamma K(t), \\[4pt]
\dot{E}(t) = \mu q(t)K(t)x(t) - \alpha E(t) - \alpha_K K(t),
\end{cases}
\label{eq:dynamics}
\end{equation}
with initial state $\mathbf{z}(0) = \mathbf{z}_0 \in \mathbb{R}^3_+$. Parameters have the following meaning:
\begin{itemize}
    \item $\beta \ge 0$: natural biodegradation rate of waste,
    \item $\gamma > 0$: depreciation rate of installed capital,
    \item $\mu > 0$: energy conversion efficiency,
    \item $\alpha > 0$: energy dissipation coefficient,
    \item $\alpha_K \ge 0$: energy cost associated with capital maintenance.
\end{itemize}
For compactness, system \eqref{eq:dynamics} can be written in control-affine form:
\[
\dot{\mathbf{z}}(t) = f\big(\mathbf{z}(t), \eta(t), \mathbf{u}(t)\big), \qquad \mathbf{z}(0) = \mathbf{z}_0.
\]

\subsection{Operational Constraints and Safe Set}
To ensure realistic operation, the state is constrained to the compact domain:
\begin{equation}
\mathcal{D} = [0,x_{\max}] \times [0,K_{\max}] \times [0,E_{\max}],
\end{equation}
which enforces finite storage, bounded capacity, and energy limits. Within this domain, a \emph{sustainable operation region} is defined as:
\begin{equation}
\mathcal{M}_Q = \Big\{ \mathbf{z} \in \mathcal{D} : x \le Q,\; K \le K_{\mathrm{eff}},\; E \ge E_{\min} \Big\},
\end{equation}
where $Q > 0$ represents the admissible waste threshold, $K_{\mathrm{eff}} \le K_{\max}$ an effective capacity limit, and $E_{\min} \ge 0$ a minimum energy requirement for viability.

\subsection{Control Objective and Robust Formulation}
The goal is to determine a control policy $\mathbf{u}(\cdot)$ that guarantees the system can be driven from an initial state $\mathbf{z}_0 \in \mathcal{D}$ to the target set $\mathcal{M}_Q$, despite the worst-case disturbance $\eta(\cdot)$. This is formulated as a robust minimum-time reachability problem:
\begin{equation}
\mathcal{T}(\mathbf{z}_0) = \inf_{\mathbf{u}(\cdot) \in \mathcal{U}} \sup_{\eta(\cdot) \in \mathcal{A}} \Big\{ t \ge 0 : \mathbf{z}(t) \in \mathcal{M}_Q,\; \mathbf{z}(s) \in \mathcal{D},\ \forall s \in [0,t] \Big\}.
\end{equation}

\subsection{Game-Theoretic Interpretation and HJBI Formulation}
The robust reachability problem admits a natural interpretation as a zero-sum differential game between the controller and an adversarial disturbance. The associated value function $V(\mathbf{z}_0)$ represents the minimal guaranteed time to reach $\mathcal{M}_Q$ from $\mathbf{z}_0$ under worst-case conditions. By dynamic programming, $V$ satisfies the Hamilton–Jacobi–Bellman–Isaacs (HJBI) equation:
\begin{equation}
-\partial_{t} V(t,\mathbf{z}) + \min_{\mathbf{u} \in \mathcal{U}} \max_{\eta \in [\eta_{\min},\eta_{\max}]} \big\{ \nabla_{\mathbf{z}} V \cdot \mathbf{f}(\mathbf{z},\eta,\mathbf{u}) \big\} = 0, \quad V(T,\mathbf{z}) = \Psi(\mathbf{z}),
\end{equation}
where $\Psi(\mathbf{z})$  is a terminal cost function that encodes the target set $\mathcal{M}_Q$ for the minimum-time reachability problem. The Hamiltonian is thus given by:
\begin{equation}
H(\mathbf{z},p) = \min_{\mathbf{u} \in \mathcal{U}} \max_{\eta \in [\eta_{\min},\eta_{\max}]} \big\{ p \cdot f(\mathbf{z},\eta,\mathbf{u}) \big\}, \qquad p = \nabla V.
\end{equation}
The zero sublevel set $\{ \mathbf{z} : V(\mathbf{z}) \le 0 \}$ coincides with the \emph{backward reachable set} and, under infinite horizon, with the \emph{viability kernel} of the system.

\begin{remark}[Interpretation and Novelty]
The dynamics in \eqref{eq:dynamics} integrate physical and economic aspects of WtE operation. The state variable $x(t)$ represents the dynamic waste buffer, $K(t)$ captures capacity investment subject to depreciation, and $E(t)$ accounts for cumulative energy output, a key performance metric for economic viability. Uncertainty in $\eta(t)$ models the fundamental operational risk: unpredictable waste inflows. Unlike previous WtE optimization studies \cite{Dekkaki2022,Dekkaki2024}, which maximize expected performance under deterministic assumptions, our formulation prioritizes \emph{safety guarantees under worst-case conditions}. This distinction motivates the use of viability theory and HJ reachability as rigorous tools for computing invariant safety regions, marking a departure from conventional control strategies and opening new pathways for resilient energy system design.
\end{remark}

\section{Robust Reachability under Uncertainty: A Hamilton--Jacobi Framework}
\label{sec:HJ-framework}

This section presents a rigorous characterization of the robust reachability problem for Waste-to-Energy (WtE) systems under uncertain waste inflow using Hamilton--Jacobi (HJ) theory. Building upon viability theory and differential games, we formalize the computation of the set of states from which the controller can guarantee safety and target attainment despite worst-case disturbances. Our formulation establishes the equivalence between backward reachable sets (BRS), a value function defined via a minimax principle, and the viscosity solution of a constrained HJ partial differential equation (PDE). The exposition provides detailed assumptions, definitions, and proofs to ensure clarity and mathematical rigor.

\subsection{Problem Setting and Assumptions}

We consider the controlled dynamics:
\begin{equation}\label{eq:sys}
\dot{\mathbf{z}}(t)=f(\mathbf{z}(t),\eta(t),\mathbf{u}(t)), \qquad \mathbf{z}(0)=y,\; t\in[0,T],
\end{equation}
where the state vector $\mathbf{z}(t)=(x(t),K(t),E(t))\in\mathbb{R}^3_+$ represents waste stock, invested capital, and cumulative energy. The control $\mathbf{u}(t)=(q(t),I(t))$ belongs to:
\[
\mathcal{U}=[0,q_{\max}]\times[0,I_{\max}],
\]
and the disturbance $\eta(t)$, modeling uncertain waste inflow, satisfies:
\[
\eta(t)\in\mathcal{A}=[\eta_{\min},\eta_{\max}], \qquad \forall t\ge0.
\]

The state must remain inside the feasible set:
\[
\mathcal{D}=[0,x_{\max}]\times[0,K_{\max}]\times[0,E_{\max}],
\]
and eventually reach the target set:
\[
\mathcal{M}_Q=\{\mathbf{z}\in\mathcal{D}: x\le Q,\; K\le K_{\text{eff}},\; E\ge E_{\min}\},
\]
where $Q>0$ bounds residual waste and $K_{\text{eff}}\le K_{\max}$ ensures sustainable capital levels.

\paragraph{Standing Assumptions.}
\begin{enumerate}
\item \textbf{(A1) Regularity:} $f$ is uniformly Lipschitz in $\mathbf{z}$ and satisfies linear growth; $\mathcal{U},\mathcal{A}$ are compact.
\item \textbf{(A2) Isaacs Condition:} For all $(\mathbf{z},p)\in\mathcal{D}\times\mathbb{R}^3$,
\[
\min_{\mathbf{u}\in\mathcal{U}}\max_{\eta\in\mathcal{A}}\langle p,f(\mathbf{z},\eta,\mathbf{u})\rangle
=
\max_{\eta\in\mathcal{A}}\min_{\mathbf{u}\in\mathcal{U}}\langle p,f(\mathbf{z},\eta,\mathbf{u})\rangle.
\]
\end{enumerate}

\begin{remark}[Why Isaacs Condition Holds]
The dynamics $f$ are affine in $(\mathbf{u},\eta)$, and the Hamiltonian is bilinear in $(\mathbf{u},\eta,p)$. By Sion’s minimax theorem, the Isaacs condition is satisfied, ensuring equality of upper and lower game values.
\end{remark}

\subsection{Backward Reachable Set and Viability Connection}

Robust reachability is expressed through the backward reachable set (BRS), which coincides with the viability kernel for $(\mathcal{D},\mathcal{M}_Q)$ under uncertainty.

\begin{definition}[Backward Reachable Set]\label{def:BRS}
For $t\in[0,T]$, define:
\begin{equation}\label{eq:BRS}
\mathcal{R}_Q(t)=\Big\{ y\in\mathcal{D}: \exists \mathbf{u}(\cdot)\in\mathcal{U},\; \forall \eta(\cdot)\in\mathcal{A},\;
\mathbf{z}_y^\eta(s)\in\mathcal{D},\ \forall s\in[t,T],\; \mathbf{z}_y^\eta(T)\in\mathcal{M}_Q \Big\},
\end{equation}
where $\mathbf{z}_y^\eta$ denotes the trajectory of \eqref{eq:sys} starting at $y$ under $(\mathbf{u},\eta)$.
\end{definition}

\begin{remark}[Interpretation]
$\mathcal{R}_Q(t)$ contains the states from which the controller can guarantee that all constraints are satisfied and the target is reached at $T$, regardless of the disturbance strategy.
\end{remark}

\subsection{Value Function and Signed Distance Encoding}

Define signed distance functions for $\mathcal{M}_Q$ and $\mathcal{D}$:
\[
g_Q(\mathbf{z})=\begin{cases}
\mathrm{d}(\mathbf{z},\partial\mathcal{M}_Q),&\mathbf{z}\notin\mathcal{M}_Q,\\
-\mathrm{d}(\mathbf{z},\mathbb{R}^3\setminus\mathcal{M}_Q),&\mathbf{z}\in\mathcal{M}_Q,
\end{cases}
\quad
g_D(\mathbf{z})=\begin{cases}
\mathrm{d}(\mathbf{z},\partial\mathcal{D}),&\mathbf{z}\notin\mathcal{D},\\
-\mathrm{d}(\mathbf{z},\mathbb{R}^3\setminus\mathcal{D}),&\mathbf{z}\in\mathcal{D}.
\end{cases}
\]

\begin{definition}[Robust Value Function]\label{def:value}
For $(t,y)\in[0,T]\times\mathcal{D}$, define:
\begin{equation}\label{eq:value_function}
V(t,y)=\inf_{\mathbf{u}(\cdot)}\sup_{\eta(\cdot)}\max\Big\{
g_Q(\mathbf{z}_y^\eta(T)),\; \sup_{s\in[t,T]} g_D(\mathbf{z}_y^\eta(s))
\Big\}.
\end{equation}
\end{definition}

\paragraph{Interpretation.} $V(t,y)$ is the worst-case signed constraint violation under optimal control. If $V(t,y)\le0$, the state $y$ is safe and can reach $\mathcal{M}_Q$.

\begin{proposition}[Characterization of BRS]\label{prop:BRS}
\[
\mathcal{R}_Q(t)=\{ y\in\mathcal{D}: V(t,y)\le0\}.
\]
\end{proposition}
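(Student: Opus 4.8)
The plan is to prove the set equality by double inclusion, reducing both directions to the defining sign property of the signed-distance encodings. Because $\mathcal{M}_Q$ and $\mathcal{D}$ are closed, one has $g_Q(\mathbf{z})\le 0\iff\mathbf{z}\in\mathcal{M}_Q$ and $g_D(\mathbf{z})\le 0\iff\mathbf{z}\in\mathcal{D}$. Writing the inner payoff as $J(t,y,\mathbf{u},\eta):=\max\{g_Q(\mathbf{z}_y^\eta(T)),\,\sup_{s\in[t,T]}g_D(\mathbf{z}_y^\eta(s))\}$, the first step is to record the pointwise equivalence: for any fixed admissible pair $(\mathbf{u},\eta)$, one has $J(t,y,\mathbf{u},\eta)\le 0$ if and only if $\mathbf{z}_y^\eta(s)\in\mathcal{D}$ for all $s\in[t,T]$ and $\mathbf{z}_y^\eta(T)\in\mathcal{M}_Q$. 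This is the bridge between the geometric constraints of Definition~\ref{def:BRS} and the analytic payoff of Definition~\ref{def:value}, since $\sup_{s}g_D\le 0$ is exactly the statement that the running constraint is never violated.

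For the inclusion $\mathcal{R}_Q(t)\subseteq\{V(t,\cdot)\le 0\}$, I would take $y\in\mathcal{R}_Q(t)$ and fix the control $\mathbf{u}^\ast$ witnessing membership. The pointwise equivalence makes $J(t,y,\mathbf{u}^\ast,\eta)\le 0$ for every $\eta$, so $\sup_\eta J(t,y,\mathbf{u}^\ast,\eta)\le 0$, and passing to the infimum over controls yields $V(t,y)\le 0$. This direction is immediate and needs no attainment.

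The reverse inclusion is where the work lies. From $V(t,y)\le 0$ I only obtain a minimizing sequence $\mathbf{u}_n$ with $\sup_\eta J(t,y,\mathbf{u}_n,\eta)\to V(t,y)\le 0$, whereas membership in $\mathcal{R}_Q(t)$ demands a \emph{single} control realizing $\sup_\eta J\le 0$. The plan is to show the infimum is attained. Under (A1) the admissible controls form a bounded, convex, weak-$\ast$ closed, hence weak-$\ast$ sequentially compact subset of $L^\infty([t,T])$; and because $f$ is affine in $(\mathbf{u},\eta)$ — the very structure invoked in the Isaacs remark — the flow map $\mathbf{u}\mapsto\mathbf{z}_y^\eta(\cdot)$ is continuous from weak-$\ast$ to uniform convergence for each fixed $\eta$, via Gronwall and the linear-growth bound. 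Since $g_Q,g_D$ are $1$-Lipschitz, $J(t,y,\cdot,\eta)$ is continuous in $\mathbf{u}$ for each $\eta$, so $\mathbf{u}\mapsto\sup_\eta J$ is lower semicontinuous and attains its infimum at some $\mathbf{u}^\ast$. Then $\sup_\eta J(t,y,\mathbf{u}^\ast,\eta)\le 0$ forces $J(t,y,\mathbf{u}^\ast,\eta)\le 0$ for every $\eta$, and the pointwise equivalence gives constraint satisfaction on $[t,T]$ and target attainment at $T$ against all disturbances, i.e. $y\in\mathcal{R}_Q(t)$.

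The main obstacle is exactly this attainment at the boundary case $V(t,y)=0$, where a minimizing sequence of controls need not converge without the compactness/closure argument, and where the convexification implicit in weak-$\ast$ limits could a priori enlarge the reachable set. The control-affine form of the dynamics is precisely what rules this out, guaranteeing that the weak-$\ast$ limit of controls yields the uniform limit of trajectories and hence that the set of reachable trajectories is closed. I expect this to be the only delicate point; once the pointwise equivalence and this closure property are in hand, the signed-distance bookkeeping and the easy inclusion are routine.
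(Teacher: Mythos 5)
Your proposal is correct and, at the level of architecture, follows the same double-inclusion skeleton as the paper: the sign equivalences $g_Q(\mathbf{z})\le 0\iff\mathbf{z}\in\mathcal{M}_Q$ and $g_D(\mathbf{z})\le 0\iff\mathbf{z}\in\mathcal{D}$, together with the easy inclusion $\mathcal{R}_Q(t)\subseteq\{V(t,\cdot)\le 0\}$, are exactly the paper's argument. Where you genuinely depart is the converse inclusion. The paper dispatches it in one sentence --- ``if $V(t,y)\le 0$, $\exists\,\mathbf{u}$ ensuring $g_Q\le 0$ and $\sup g_D\le 0$ for all $\eta$'' --- which silently assumes that the infimum over controls is attained. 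You correctly isolate this as the only delicate point: when $V(t,y)<0$ a near-optimal control already witnesses membership, but at the boundary case $V(t,y)=0$ a minimizing sequence only yields $\sup_\eta J(t,y,\mathbf{u}_n,\eta)\to 0$, which does not by itself produce a single admissible control achieving $\sup_\eta J\le 0$. Your attainment lemma --- box-valued controls form a convex, bounded, weak-$\ast$ closed (hence weak-$\ast$ sequentially compact) subset of $L^\infty$; the affine-in-$(\mathbf{u},\eta)$ dynamics make the control-to-trajectory map continuous from weak-$\ast$ to uniform convergence (the Gronwall estimate works precisely because the non-Lipschitz part of the error is $\int(\mathbf{u}_n-\mathbf{u})\cdot G(\mathbf{z})\,dr$, which vanishes by weak-$\ast$ convergence); Lipschitz continuity of $g_Q,g_D$ then gives continuity of $J(\cdot,\eta)$ and lower semicontinuity of $\sup_\eta J$, so the infimum is attained --- is a standard and valid way to close this gap, and your observation that convexity of the control box plus the control-affine structure is what prevents weak-$\ast$ limits from enlarging the trajectory set (a Filippov-type closedness property) is exactly the right reason. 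In short: same decomposition as the paper, but your treatment of the converse direction is strictly more complete; the paper's version is an assertion where yours supplies the missing existence argument.
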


\begin{proof}
If $y\in\mathcal{R}_Q(t)$, then $\exists \mathbf{u}$ s.t. $\forall \eta$, $\mathbf{z}_y^\eta(s)\in\mathcal{D}$ and $\mathbf{z}_y^\eta(T)\in\mathcal{M}_Q$. Thus $g_D,g_Q\le0$, giving $V(t,y)\le0$. Conversely, if $V(t,y)\le0$, $\exists \mathbf{u}$ ensuring $g_Q\le0$ and $\sup g_D\le0$ for all $\eta$, proving reachability.
\end{proof}

\subsection{Dynamic Programming and HJ Characterization}

The value function satisfies the following constrained Hamilton–Jacobi PDE:

\begin{theorem}[HJ PDE for Robust Reachability]\label{thm:HJB}
$V$ is the unique bounded uniformly continuous viscosity solution of:
\begin{equation}\label{eq:HJB}
\begin{cases}
\min\{-\partial_t V+H(y,\nabla V),\; V-g_D(y)\}=0,&(t,y)\in[0,T)\times\mathcal{D},\\
V(T,y)=\max\{g_Q(y),g_D(y)\},& y\in\mathcal{D},
\end{cases}
\end{equation}
where the Hamiltonian is:
\[
H(y,p)=\min_{\mathbf{u}\in\mathcal{U}}\max_{\eta\in\mathcal{A}}\langle p,f(y,\eta,\mathbf{u})\rangle.
\]
\end{theorem}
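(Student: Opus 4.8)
The plan is to proceed in four stages: regularity of $V$, a dynamic programming principle (DPP), derivation of the viscosity sub/supersolution inequalities from the DPP, and finally uniqueness via a comparison principle. First I would establish that $V$ is bounded and uniformly continuous. Boundedness is immediate, since $g_Q$ and $g_D$ are bounded on the compact data and the $\max$ and $\inf\sup$ operations preserve boundedness. For uniform continuity I would invoke (A1): since $f$ is Lipschitz in $\mathbf{z}$ with linear growth and $\mathcal{U},\mathcal{A}$ are compact, Gronwall's lemma gives that $\mathbf{z}_y^\eta(\cdot)$ depends Lipschitz-continuously on the initial datum $y$ and on the time, \emph{uniformly} over admissible $\mathbf{u}(\cdot)$ and $\eta(\cdot)$. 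As $g_Q,g_D$ are $1$-Lipschitz (being signed distances) and the operations $\max$, $\sup_s$, and $\inf\sup$ are all nonexpansive, $V$ inherits a modulus of continuity in $(t,y)$.

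Second, and structurally the heart of the argument, I would prove the DPP. To make the $\inf_{\mathbf{u}}\sup_{\eta}$ in \eqref{eq:value_function} compatible with dynamic programming, I would recast the game using Elliott–Kalton nonanticipative strategies, replacing the open-loop $\sup_{\eta}$ by a supremum over nonanticipative disturbance responses; Assumption (A2) (Isaacs) then guarantees that the lower and upper game values coincide, so the Hamiltonian $H$ appearing in \eqref{eq:HJB} is unambiguous. For $0\le t\le t+h\le T$ the DPP reads $V(t,y)=\inf\sup\max\{\sup_{s\in[t,t+h]}g_D(\mathbf{z}_y^\eta(s)),\,V(t+h,\mathbf{z}_y^\eta(t+h))\}$, which I would establish by splitting trajectories at time $t+h$ and concatenating strategies, using a measurable-selection argument for the optimizing control on $[t,t+h]$.

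Third, I would read off the PDE. The obstacle inequality $V\ge g_D$ holds \emph{trivially}: taking $s=t$ in $\sup_{s\in[t,T]}g_D$ forces $V(t,y)\ge g_D(\mathbf{z}_y^\eta(t))=g_D(y)$, which already supplies the second branch of the $\min$. For the interior inequalities at $t<T$, I would take a smooth $\phi$ touching $V$ from above (subsolution) or below (supersolution) at $(t_0,y_0)$, insert it into the DPP, divide by $h$, and let $h\to0$, Taylor-expanding $\phi$ along the dynamics; the $\min_{\mathbf{u}}\max_{\eta}$ reproduces exactly $-\partial_t\phi+H(y_0,\nabla\phi)$. Where $V(t_0,y_0)>g_D(y_0)$ the obstacle is inactive and the full equation $-\partial_t V+H=0$ holds in the viscosity sense; otherwise the active obstacle realizes the $\min$. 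Because the payoff enters through a supremum (an $L^\infty$-type running cost) rather than an integral, the appropriate solution concept and the associated comparison theory are those of Barron–Jensen \cite{Barron1990}, which I would follow. The terminal condition is obtained by collapsing $\sup_{s\in[T,T]}$ to $s=T$ directly in \eqref{eq:value_function}.

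Fourth, for uniqueness I would prove a comparison principle for the variational inequality by the doubling-of-variables technique, adapted to the obstacle term: the structural estimate $|H(x,p)-H(y,p)|\le C(1+|p|)\,|x-y|$ follows from (A1), and since $V\mapsto V-g_D$ is monotone nondecreasing, the obstacle is compatible with the comparison argument. Crucially, encoding the state constraint as the level-set obstacle $V\ge g_D$ (rather than as a pointwise boundary condition on $\partial\mathcal{D}$) is what allows the comparison to proceed without any inward-pointing or controllability hypothesis at the boundary. The hard part will be precisely these two steps: making the nonanticipative-strategy DPP fully rigorous (strategy concatenation and measurable selection) and pushing the doubling-of-variables estimate through the nonsmooth $\min$/obstacle structure in the Barron–Jensen setting; the regularity, the terminal condition, and the trivial obstacle inequality are routine by comparison.
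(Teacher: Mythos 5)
Your proposal is correct and follows essentially the same route as the paper's proof: a dynamic programming principle, Taylor expansion of smooth test functions to obtain the sub- and supersolution inequalities with the obstacle term $V-g_D$, and a comparison principle in the Barron--Jensen framework \cite{Barron1990} for uniqueness. Your plan is in fact more complete than the paper's sketch, which asserts the DPP directly for the open-loop $\inf_{\mathbf{u}}\sup_{\eta}$ value without the nonanticipative-strategy (Elliott--Kalton) reformulation you correctly identify as necessary for the DPP to hold, and which omits both the bounded-uniform-continuity argument for $V$ and the explicit doubling-of-variables structure of the comparison step.
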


\begin{proof}
The proof follows the standard viscosity solution framework for reach-avoid problems.  
\textbf{Step 1: Supersolution.} Let $\phi\in C^1$ touch $V$ from below at $(t_0,y_0)$. Apply the DPP for $h>0$:
\[
V(t_0,y_0)\le\inf_{\mathbf{u}}\sup_{\eta}\max\{V(t_0+h,\mathbf{z}^\eta(t_0+h)),\sup g_D\}.
\]
Expand $\phi$:
\[
\phi(t_0+h,\mathbf{z}^\eta(t_0+h))=\phi(t_0,y_0)+h(\partial_t\phi+\langle\nabla\phi,f\rangle)+o(h).
\]
Divide by $h$, let $h\to0$, and take $\inf\sup$ to obtain:
\[
\min\{-\partial_t\phi+H(y_0,\nabla\phi),\; V-g_D(y_0)\}\ge0.
\]
\textbf{Step 2: Subsolution.} Similar reasoning for $\phi$ touching from above yields $\le0$. \\
\textbf{Step 3: Uniqueness.} $H$ is continuous, convex in $p$, and coercive. The comparison principle for state-constrained HJ equations ensures uniqueness (see \cite{Barron1990}).
\end{proof}

\subsection{Explicit Hamiltonian and Optimal Controls}

For $p=(p_x,p_K,p_E)$ and $\mathbf{u}=(q,I)$:
\[
H(y,p)=p_x(-\beta x - qKx)+p_K(-\gamma K)+p_E(-\alpha E-\alpha_KK)+\max\{p_x\eta_{\min},p_x\eta_{\max}\}+\min_I p_KI+\min_q (\mu p_E-p_x)qKx.
\]
Optimal bang-bang controls:
\[
I^*=\begin{cases}0,&p_K>0,\\I_{\max},&p_K<0,\end{cases}\qquad
q^*=\begin{cases}0,&\mu p_E-p_x>0,\\q_{\max},&\mu p_E-p_x<0.\end{cases}
\]

\begin{remark}[Interpretation of Optimal Strategy]
The extremal structure of $(I^*,q^*)$ is consistent with Pontryagin’s Maximum Principle, adapted here to a minimax framework. This structure enables efficient implementation of feedback laws derived from the value function gradient.
\end{remark}

\section{Numerical Implementation and Simulation Results}
\label{sec:numerics}

This section presents the numerical realization of the Hamilton--Jacobi (HJ) reachability framework described in Section~\ref{sec:HJ-framework}. The primary objective is to compute the backward reachable set (BRS), $\mathcal{R}_Q(t)$, defined in \eqref{eq:BRS}, which characterizes all admissible initial states $\mathbf{z}_0=(x_0,K_0,E_0)$ from which the target set:
\[
\mathcal{M}_Q=\{\mathbf{z}\in\mathcal{D}:\;x\le Q,\;K\le K_{\text{eff}},\;E\ge E_{\min}\}
\]
can be reached within a prescribed horizon $T$, while satisfying state constraints under worst-case disturbances $\eta(\cdot)\in[\eta_{\min},\eta_{\max}]$. As shown in Theorem~\ref{thm:HJB}, $\mathcal{R}_Q(t)$ coincides with the zero-sublevel set of the value function $V(t,\mathbf{z})$, which solves the constrained Hamilton--Jacobi PDE:
\[
\min\big\{-\partial_t V(t,\mathbf{z})+H(\mathbf{z},\nabla V),\;V(t,\mathbf{z})-g_D(\mathbf{z})\big\}=0,\quad
V(T,\mathbf{z})=\max\{g_Q(\mathbf{z}),g_D(\mathbf{z})\}.
\]

\subsection{Numerical Discretization Framework}

The PDE above is solved on a three-dimensional structured grid using a level-set method combined with the Local Lax--Friedrichs (LLF) numerical Hamiltonian. The LLF scheme ensures monotonicity and consistency, guaranteeing convergence to the viscosity solution (see \cite{Osher1988,Bokanowski2006}).

\paragraph{Computational Grid.}
The state domain $\mathcal{D}=[0,x_{\max}]\times[0,K_{\max}]\times[0,E_{\max}]$ is discretized as:
\[
\mathcal{G}_h=\{\mathbf{z}_{i,j,k}=(x_i,K_j,E_k):\;x_i=i\,h_x,\;K_j=j\,h_K,\;E_k=k\,h_E,\;0\le i<N_x,\;0\le j<N_K,\;0\le k<N_E\},
\]
with uniform spacings $h_x=h_K=h_E=h$ for simplicity. In all simulations, $h=0.5$, resulting in $N_x=N_K=N_E=100$ grid points per dimension (i.e., $10^6$ states in total).

\paragraph{Time Discretization and CFL Stability.}
Time is discretized with step $\Delta t$ satisfying the Courant--Friedrichs--Lewy (CFL) condition:
\begin{equation}\label{eq:CFL}
\Delta t \le \frac{h}{\max_{\mathbf{z}\in\mathcal{D}}\Big(|\partial H/\partial p_x|+|\partial H/\partial p_K|+|\partial H/\partial p_E|\Big)},
\end{equation}
where the denominator is the maximum sum of characteristic speeds along all dimensions.

\subsection{Semi-Discrete Approximation: LLF Scheme}

Denote $V_{i,j,k}(t)\approx V(t,\mathbf{z}_{i,j,k})$. The semi-discrete scheme is:
\begin{equation}\label{eq:LLF}
\frac{d}{dt}V_{i,j,k}(t)=-\widehat{H}^{\text{LLF}}\big(\mathbf{z}_{i,j,k},D^+V,D^-V\big),
\end{equation}
where $D^\pm$ are forward/backward difference operators approximating $\partial_x V,\partial_K V,\partial_E V$. The LLF Hamiltonian reads:
\begin{equation}
\widehat{H}^{\text{LLF}}=H\Big(\mathbf{z},\frac{\psi^+ + \psi^-}{2}\Big)-\frac{1}{2}\sum_{\ell\in\{x,K,E\}}C_\ell(\psi_\ell^+-\psi_\ell^-),
\end{equation}
where:
\begin{itemize}
\item $\psi_\ell^\pm$: forward/backward slopes in dimension $\ell$,
\item $C_\ell=\max\big| \partial H / \partial p_\ell \big|$: maximum wave speed along $\ell$.
\end{itemize}

\begin{remark}[Why LLF?]
    The LLF scheme adds a numerical viscosity term $\frac{1}{2}C_\ell(\psi_\ell^+-\psi_\ell^-)$ that stabilizes the discretization without violating monotonicity, crucial for converging to the viscosity solution.
\end{remark}

\subsection{Explicit Hamiltonian Approximation for WtE System}

For the controlled dynamics:
\[
f(\mathbf{z},\eta,\mathbf{u})=\big(\eta-(\beta+qK)x,\;I-\gamma K,\;\mu qKx-\alpha E-\alpha_KK\big),
\]
the continuous Hamiltonian is:
\begin{equation}\label{eq:HJ-Hamiltonian}
H(\mathbf{z},p)=p_x\big[\eta-(\beta+qK)x\big]+p_K(I-\gamma K)+p_E\big[\mu qKx-\alpha E-\alpha_KK\big].
\end{equation}

The LLF approximation computes:
\[
\widehat{H}^{\text{LLF}}(\mathbf{z},\zeta^-,\zeta^+)=
\sum_{\ell\in\{x,K,E\}}\Big(\max(0,a_\ell)\zeta_\ell^-+\min(0,a_\ell)\zeta_\ell^+\Big)+\Phi(\mathbf{z},\zeta^-,\zeta^+),
\]
where $a_\ell$ are drift components and $\Phi$ encodes control and disturbance optimization:
\begin{align*}
\Phi(\mathbf{z},\zeta^-,\zeta^+)=&
\;q_{\max}Kx\max(0,\zeta_x^--\mu\zeta_E^+)
+\max(0,-\zeta_K^+I_{\max})
-\max(\eta_{\min}\zeta_x^+,\eta_{\max}\zeta_x^+).
\end{align*}

This form implements the min--max optimization analytically using the bang-bang structure of optimal controls:
\[
q^*=\begin{cases}
0,&\mu \zeta_E^+-\zeta_x^- >0,\\
q_{\max},&\text{otherwise},
\end{cases}
\qquad
I^*=\begin{cases}
0,&\zeta_K^+>0,\\
I_{\max},&\text{otherwise}.
\end{cases}
\]

\subsection{Fully Discrete Update Rule}

The time integration uses explicit Euler:
\begin{equation}\label{eq:update}
V^{n+1}_{i,j,k}=\min\Big\{V^n_{i,j,k}-\Delta t\,\widehat{H}^{\text{LLF}}(\mathbf{z}_{i,j,k}),\;g_D(\mathbf{z}_{i,j,k})\Big\},\quad
V^0_{i,j,k}=\max\big\{g_Q(\mathbf{z}_{i,j,k}),g_D(\mathbf{z}_{i,j,k})\big\}.
\end{equation}

The projection $\min(\cdot,g_D)$ enforces the state constraint, consistent with the PDE structure in \eqref{eq:HJB}.

\subsection{Numerical Settings and Parameter Selection}

The solver parameters and WtE model constants are summarized in Tables~\ref{tab:num_settings} and \ref{tab:model_params}. Grid convergence was verified by comparing $50^3$ and $100^3$ resolutions, achieving an $L^\infty$ difference in $V$ below $10^{-3}$.

\begin{table}[H]
\centering
\caption{Numerical settings for HJ solver.}
\label{tab:num_settings}
\begin{tabular}{ll}
\toprule
Setting & Value \\
\midrule
Discretization & Local Lax--Friedrichs (finite difference) \\
Grid resolution & $100^3$ nodes ($10^6$ points) \\
Time steps & 4000 for $T=30$, 1333 for $T=10$ \\
Time step size & $\Delta t=T/4000$ \\
Stability criterion & CFL (Eq.~\eqref{eq:CFL}) \\
Convergence tolerance & $10^{-10}$ \\
Software & \texttt{ROC-HJ} (C++) \\
Runtime & $\approx 1.8$ hours (16-core CPU, 64 GB RAM) \\
\bottomrule
\end{tabular}
\end{table}

\begin{table}[H]
\centering
\caption{Model parameters for WtE simulation.}
\label{tab:model_params}
\begin{tabular}{lll}
\toprule
Parameter & Symbol & Value \\
\midrule
Waste inflow bounds & $\eta_{\min},\eta_{\max}$ & $25 \pm 10\%$ \\
Biodegradation rate & $\beta$ & $0.2$ \\
Capital depreciation & $\gamma$ & $0.2$ \\
Energy decay rate & $\alpha$ & $0.2$ \\
Conversion efficiency & $\mu$ & $0.8$ \\
Internal energy cost & $\alpha_K$ & $0.2$ \\
Waste threshold & $Q$ & $15.0$ \\
Capital limit & $K_{\text{eff}}$ & $10.0$ \\
Control bounds & $q_{\max},I_{\max}$ & $1.0,1.0$ \\
Time horizon & $T$ & $30$ \\
\bottomrule
\end{tabular}
\end{table}

\subsection{Simulation Scenarios}
\label{sec:scenarios}

To validate the Hamilton--Jacobi (HJ) reachability-based framework, we present three simulation scenarios that test system performance under distinct waste inflow patterns. Each scenario examines the robustness of the control law and the geometric properties of the backward reachable set (BRS) under increasing levels of uncertainty and time variability. 

The simulations share the following common settings:
\begin{itemize}
    \item Time horizon: \(T = 30\) years.
    \item Viability target set:
    \[
    \mathcal{M}_Q = \{ (x,K,E)\in\mathbb{R}^3_+ : x\le 5,\; K\le 10,\; E\ge 0\}.
    \]
    \item Control vector: \(\mathbf{u}(t) = (q(t), I(t))\), where \(q(t)\) is the waste processing rate and \(I(t)\) the capital investment rate.
\end{itemize}

For all cases, the BRS is computed as the zero-sublevel set of the value function \(V(t,\mathbf{z})\) by numerically solving the constrained HJ PDE with a level-set LLF scheme (Section~\ref{sec:numerics}). The analysis emphasizes three aspects:
\begin{enumerate}
    \item The contraction or expansion of the BRS under uncertainty.
    \item The qualitative behavior of state trajectories \((x,K,E)\) under the derived optimal feedback.
    \item The sensitivity of control profiles to inflow timing and variability.
\end{enumerate}

\subsubsection{Scenario 1: Constant Inflow (Baseline Case)}
\label{scen:constant}

We first consider a nominal inflow with no variability:
\[
\eta(t) = 25,\qquad t\in[0,30].
\]
This case serves as a benchmark for validating the theoretical predictions under ideal conditions. Figure~\ref{fig:brs_eta_fix} depicts the BRS in the \((x,K)\)-plane for a fixed energy level \(E=50\). The set spans up to \(x\approx 15\), reflecting a broad margin of viability when inflow is perfectly predictable. Its smooth, convex-like geometry confirms the consistency of the numerical approximation and indicates that the optimal control law retains full flexibility in allocating resources.

\begin{figure}[!htpb]
    \centering
    \includegraphics[width=0.9\linewidth]{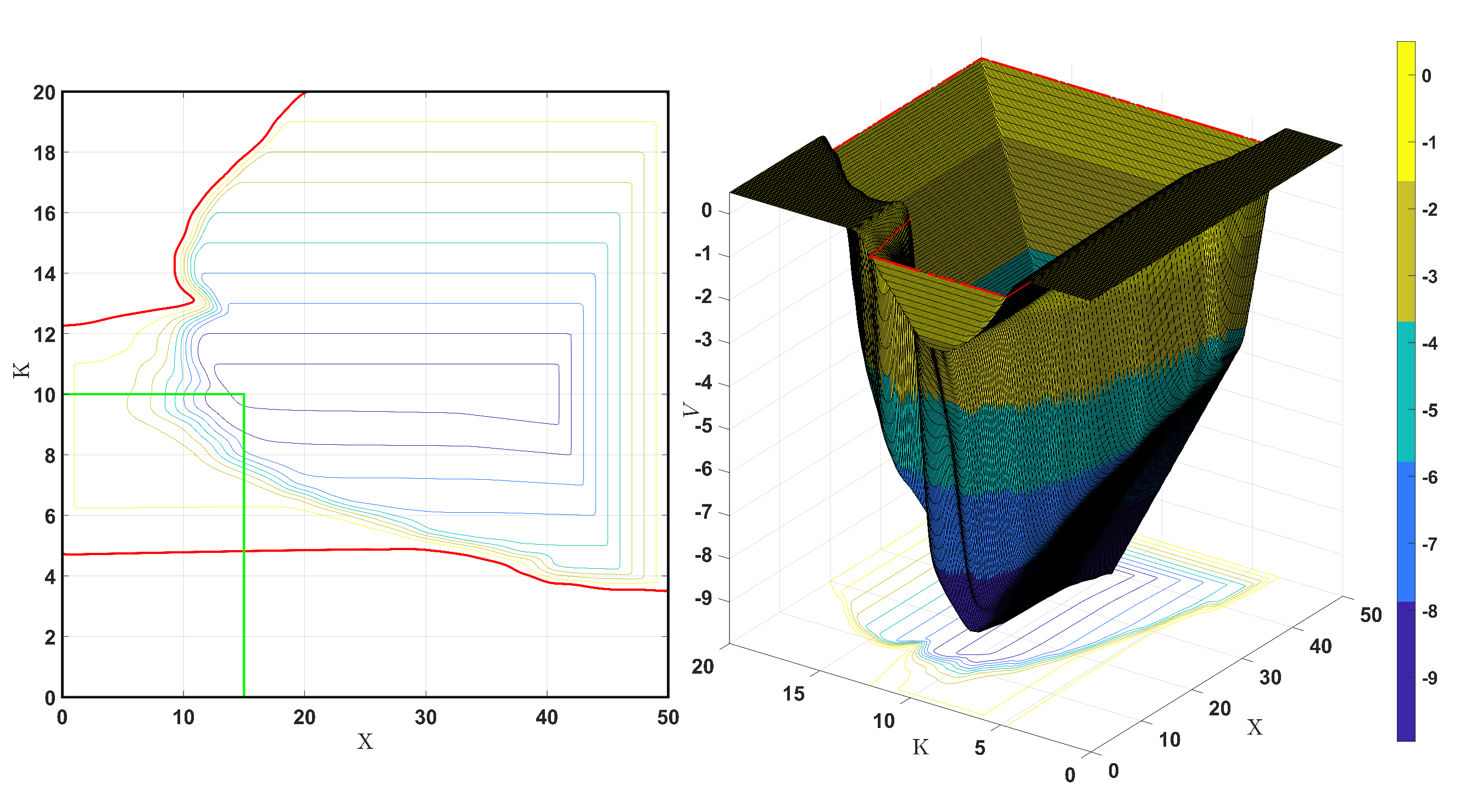}
    \caption{Backward reachable set under constant inflow, sliced at $E=50$.}
    \label{fig:brs_eta_fix}
\end{figure}

Figure~\ref{fig:states_controls_eta_fix} (left) illustrates state evolution from initial condition \((x_0,K_0,E_0)=(10,5,50)\). The waste stock decreases monotonically, capital grows steadily, and energy accumulation is persistent, leading the trajectory deep inside $\mathcal{M}_Q$. The right panel shows the corresponding $(x,K)$ path, which converges rapidly toward the target region.

\begin{figure}[!htpb]
    \centering
    \includegraphics[width=0.49\linewidth]{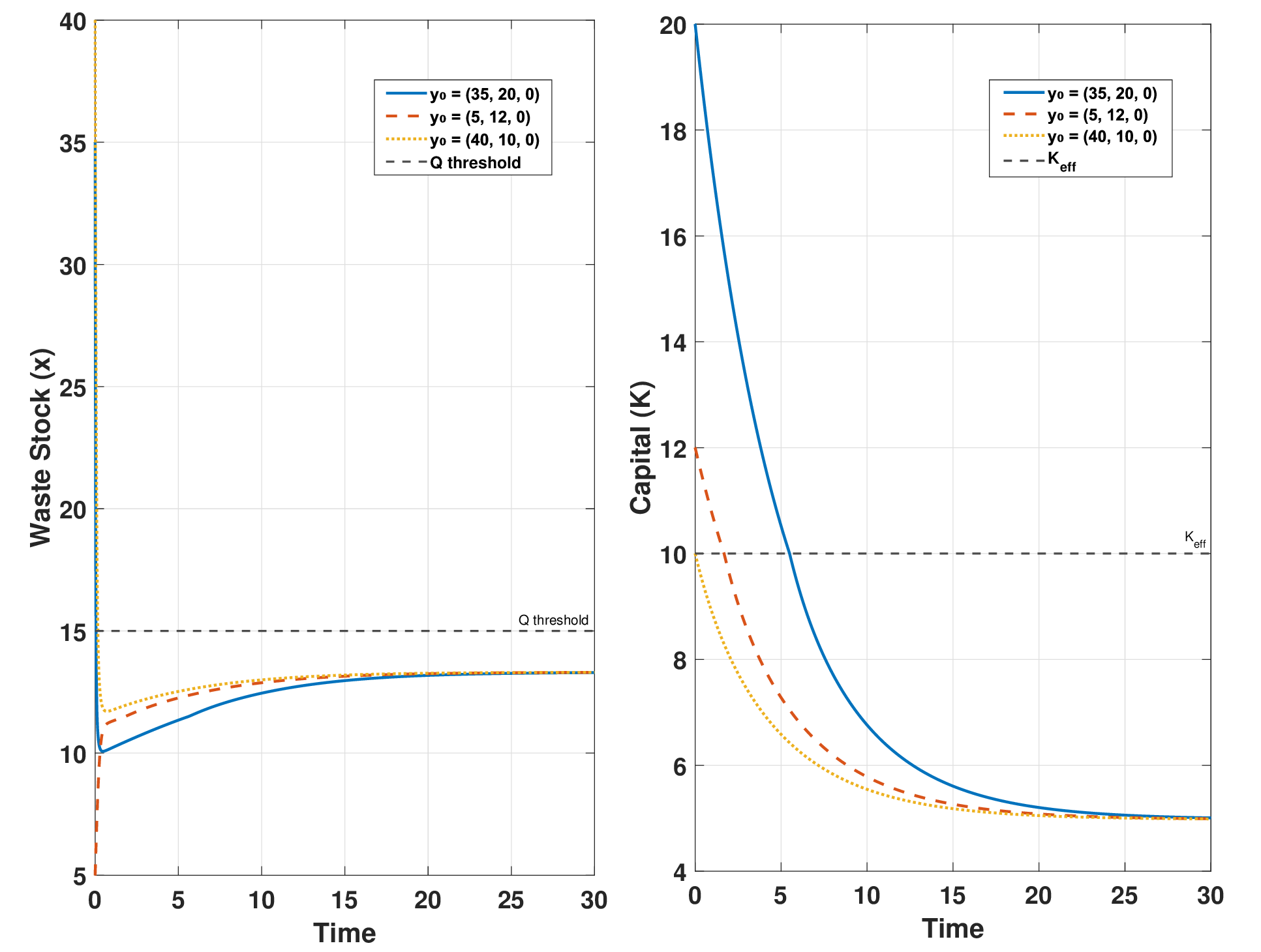}
    \includegraphics[width=0.49\linewidth]{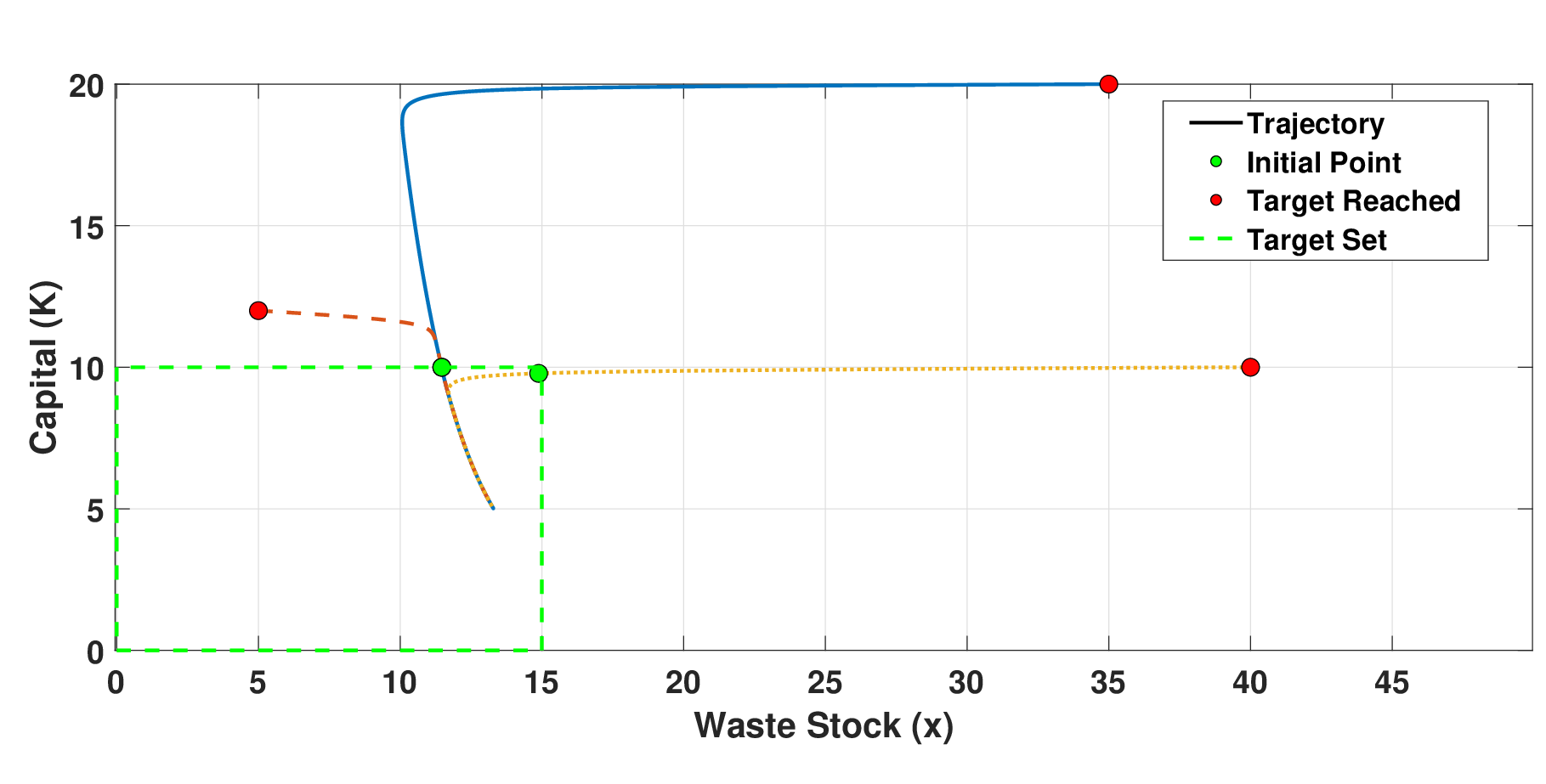}
    \caption{Left: State trajectories $(x,K,E)$ under constant inflow. Right: $(x,K)$ projection for $(10,5,50)$.}
    \label{fig:states_controls_eta_fix}
\end{figure}

Control profiles (Figure~\ref{fig:control_eta_fix}) exhibit gradual increases in $q(t)$ and $I(t)$, with no abrupt changes. This smooth behavior confirms the theoretical expectation that, in the absence of uncertainty, optimal controls exploit system dynamics without resorting to aggressive or reactive actions.

\begin{figure}[!htpb]
    \centering
    \includegraphics[width=0.7\linewidth]{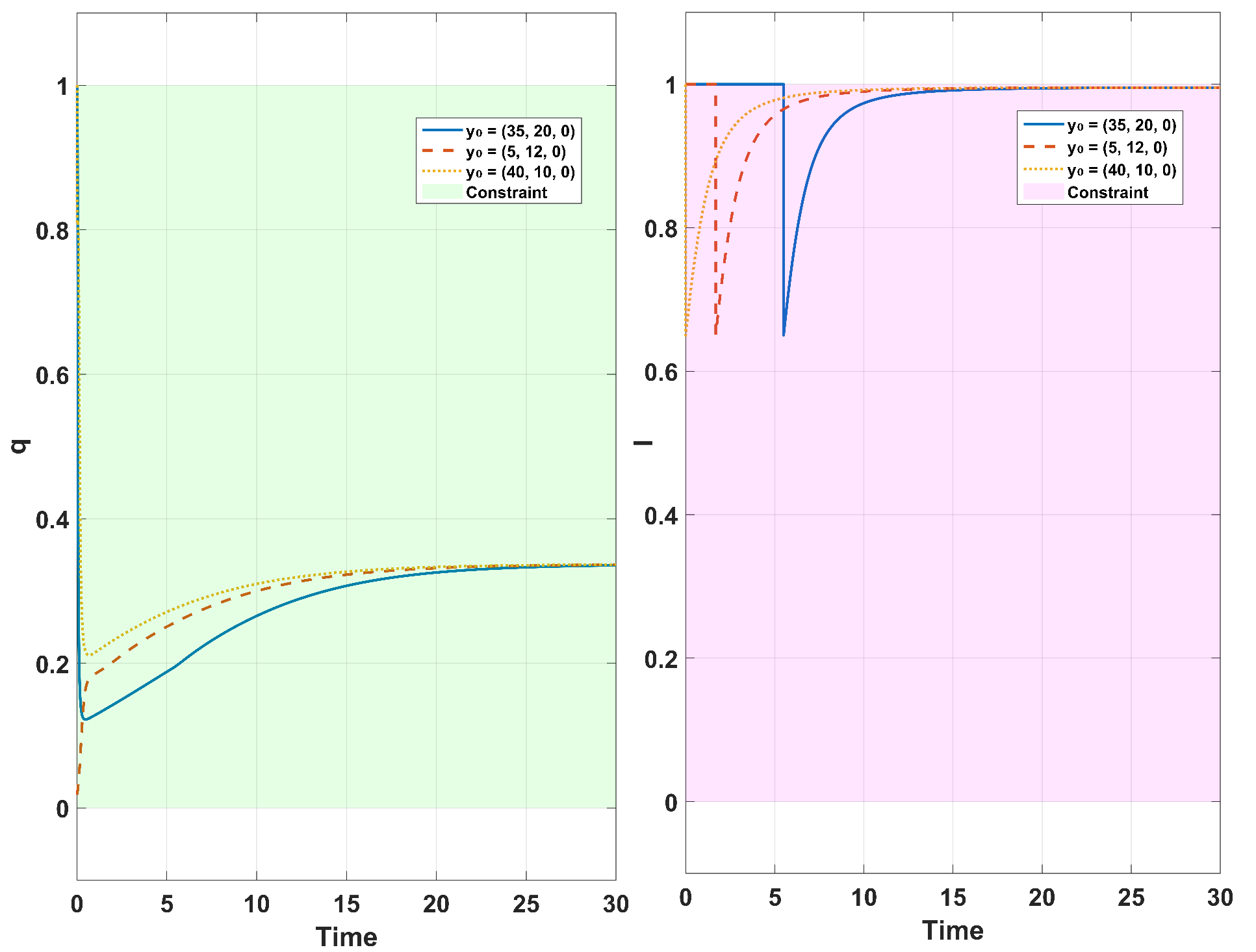}
    \caption{Control evolution under constant inflow.}
    \label{fig:control_eta_fix}
\end{figure}

Table~\ref{tab:entry_times_fix} reports entry times to $\mathcal{M}_Q$ for representative initial states. All trajectories reach viability well within the 30-year horizon, demonstrating the feasibility of the computed BRS.

\begin{table}[!htpb]
    \centering
    \caption{Entry times to $\mathcal{M}_Q$ under $\eta(t)=25$.}
    \label{tab:entry_times_fix}
    \begin{tabular}{ccc}
        \toprule
        Initial State $\mathbf{z}_0$ & $t_{\text{entry}}$ (years) & Feasible \\
        \midrule
        $(40,\ 10,\ 0)$ & 0.17 & Yes \\
        $(5,\ 12,\ 0)$  & 1.68 & Yes \\
        $(35,\ 20,\ 0)$ & 5.49 & Yes \\
        \bottomrule
    \end{tabular}
\end{table}

Scenario 1 validates the theoretical framework in an ideal setting. The large BRS volume and smooth control trajectories indicate that the HJ-based law operates efficiently when future inflow is deterministic. These results provide a baseline for contrasting with uncertainty-driven scenarios.

\vspace{0.8em}

\subsubsection{Scenario 2: Stepwise Inflow with $\pm 10\%$ Uncertainty}
\label{scen:stepwise}

We now introduce bounded uncertainty:
\[
\eta(t)\in[22.5,27.5].
\]
Three piecewise-constant profiles are defined over three equal intervals ($10$ years each):
\[
\eta_1(t)=\{27.5,25.0,22.5\},\;
\eta_2(t)=\{25.0,27.5,22.5\},\;
\eta_3(t)=\{22.5,27.5,25.0\}.
\]

Figure~\ref{fig:brs_eta10} displays the BRS for worst-case disturbances. Compared to Scenario 1, the set contracts significantly along the $x$-dimension, reducing tolerance for large initial waste loads. This geometric shrinkage reflects the system's conservative strategy under inflow ambiguity, consistent with viability theory: fewer states guarantee safety under adversarial conditions.

\begin{figure}[!htpb]
    \centering
    \includegraphics[width=0.9\linewidth]{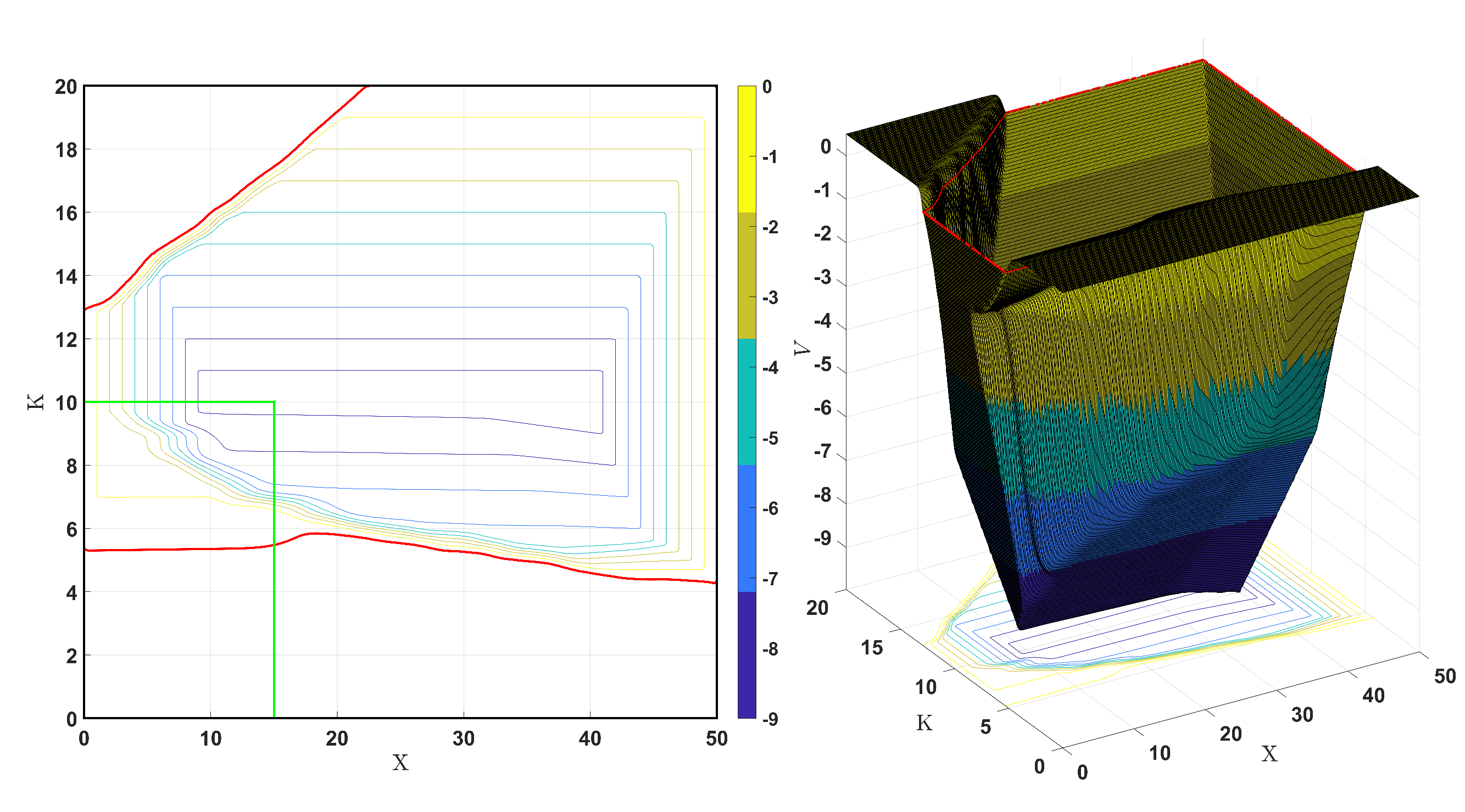}
    \caption{Backward reachable set under $\pm 10\%$ inflow uncertainty.}
    \label{fig:brs_eta10}
\end{figure}

Figure~\ref{fig:state_control_eta10} (left) shows state trajectories for the three inflow profiles from initial state $(40,12,0)$. The early-peak inflow accelerates energy growth, while the delayed-peak case slows convergence. The control responses (right) adapt accordingly: aggressive $q(t)$ and $I(t)$ for early peaks, moderated efforts for low-start profiles.

\begin{figure}[!htpb]
    \centering
    \includegraphics[width=0.49\linewidth]{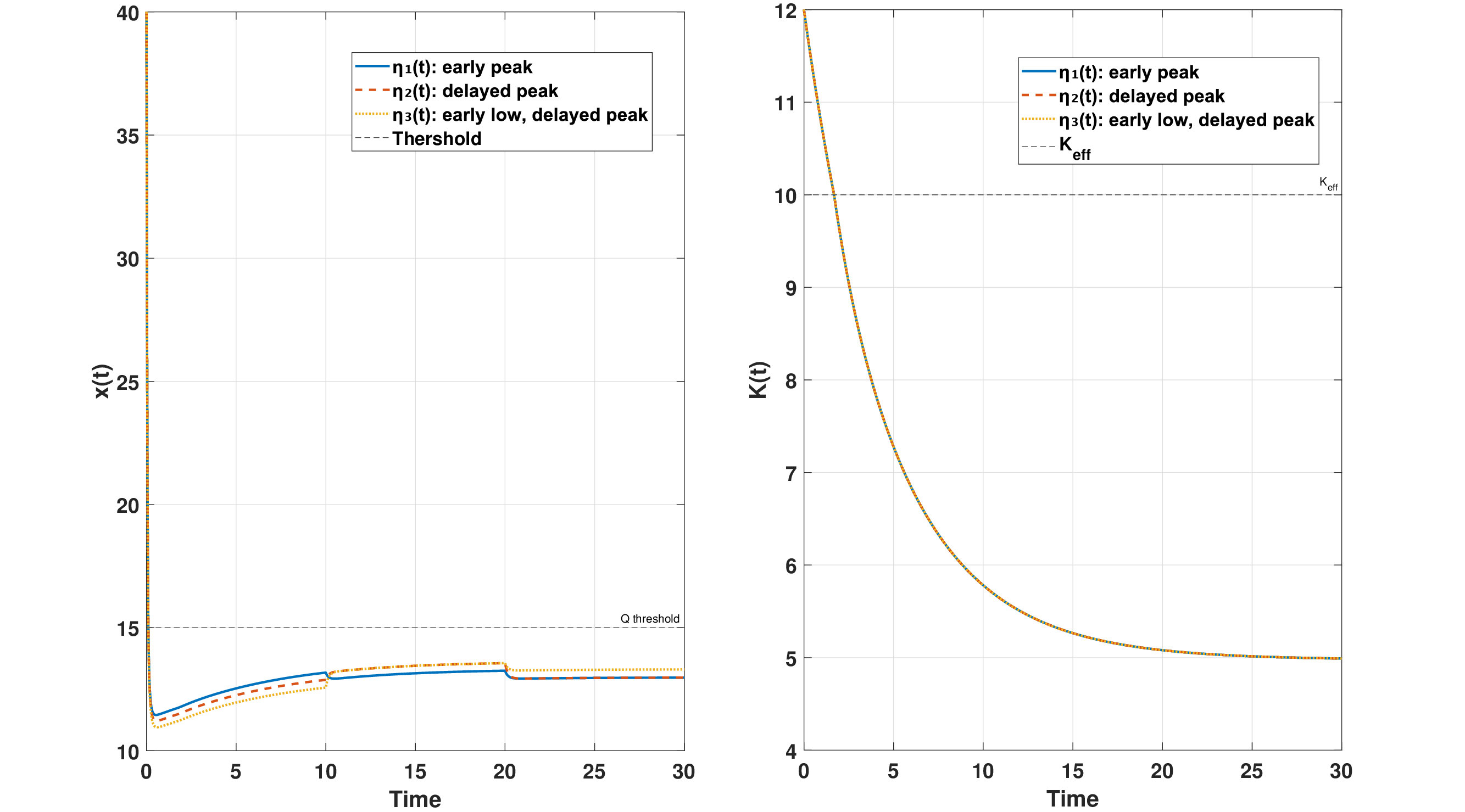}
    \includegraphics[width=0.49\linewidth]{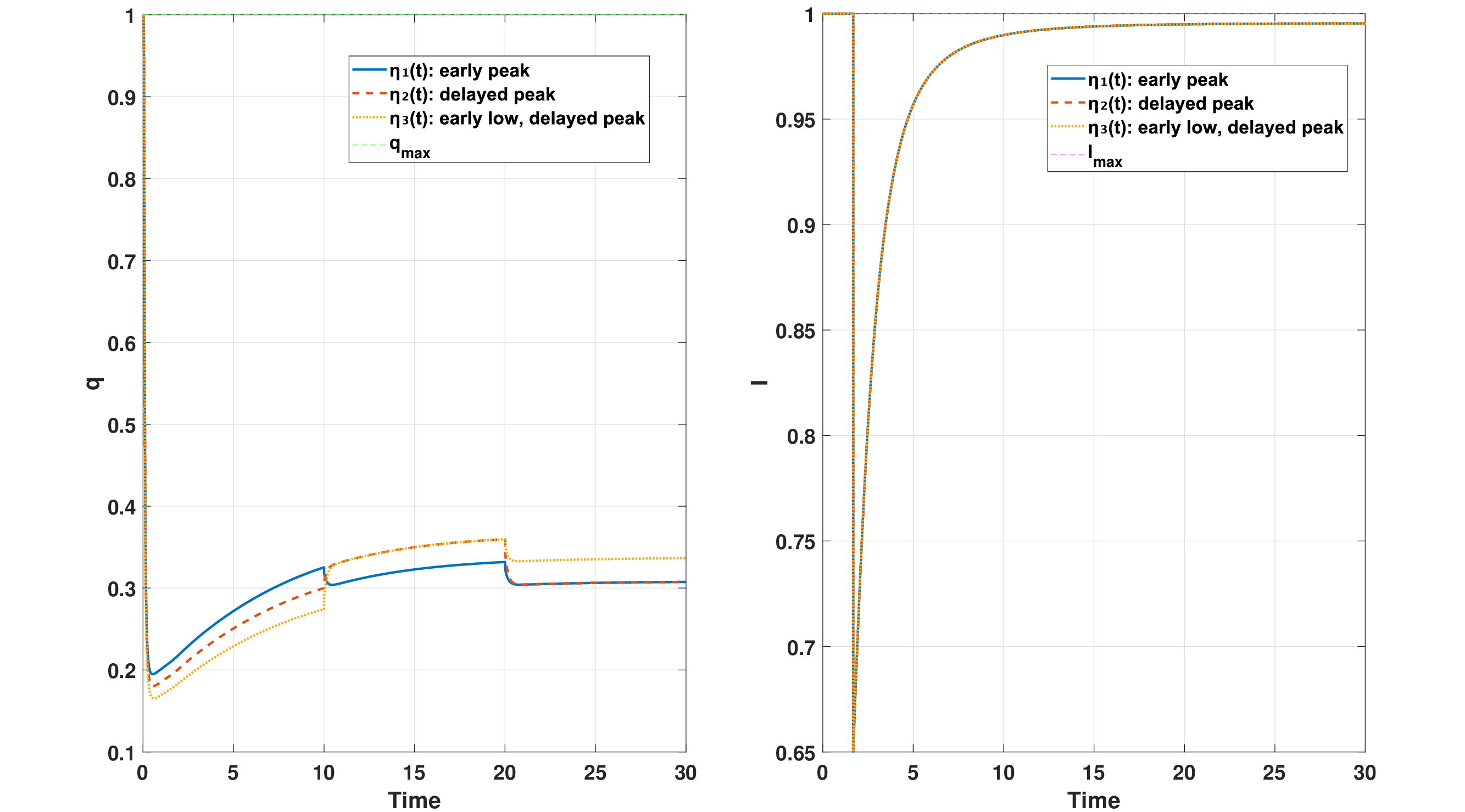}
    \caption{Left: State trajectories under three inflow profiles. Right: Control strategies.}
    \label{fig:state_control_eta10}
\end{figure}

Energy trajectories (Figure~\ref{fig:energy_eta10}) confirm this pattern: front-loaded inflows yield higher early energy gains, while back-loaded scenarios delay production. Yet, all cases achieve viability, highlighting robustness despite performance variability.

\begin{figure}[!htpb]
    \centering
    \includegraphics[width=0.7\linewidth]{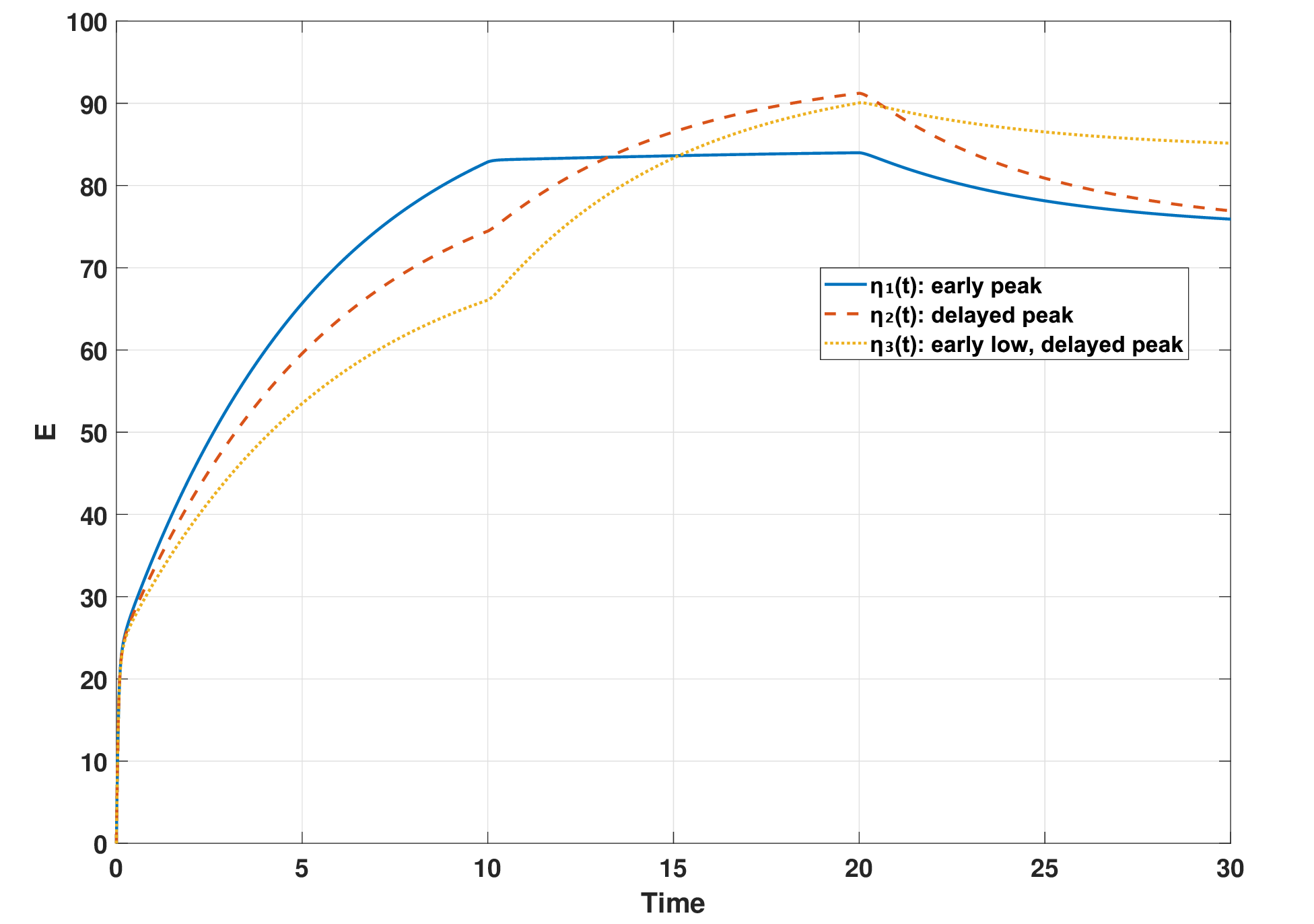}
    \caption{Energy evolution $E(t)$ under stepwise inflow variations.}
    \label{fig:energy_eta10}
\end{figure}

Scenario 2 demonstrates the system's resilience under bounded uncertainty but reveals critical structural insights:  
(i) BRS contraction implies increased sensitivity of initial feasibility to uncertainty;  
(ii) The timing of inflow peaks significantly influences energy efficiency, even if cumulative inflow remains constant;  
(iii) Control effort reallocates dynamically, preserving viability at the expense of delayed or uneven energy gains.

\vspace{0.8em}

\subsubsection{Scenario 3: Periodic Inflow with Jump Disturbances}
\label{scen:periodic}

Finally, we consider a dynamic inflow combining smooth seasonal variability with discrete shocks:
\[
\eta(t)=\underbrace{\eta_\omega\sin^2\left(\frac{\pi t}{\rho_\omega}\right)+\tau_\omega}_{\text{baseline oscillation}}+\underbrace{J(t)}_{\text{jumps}},
\]
where $\eta_\omega=20$, $\rho_\omega=5$, $\tau_\omega=15$, and:
\[
J(t)=\begin{cases}
+5,&10\le t<20,\\
-3,&20\le t<30,\\
0,&\text{otherwise}.
\end{cases}
\]

This hybrid pattern captures realistic fluctuations from seasonal waste generation and abrupt shocks (e.g., policy changes, demand surges). The initial condition is $(x_0,K_0,E_0)=(5,12,0)$.

Figure~\ref{figrechability_sin} shows the BRS under these conditions. Compared to the baseline, the set is markedly smaller near high $x$ values, confirming that combined variability and shocks impose stricter viability constraints.

\begin{figure}[!htpb]
    \centering
    \includegraphics[width=0.9\linewidth]{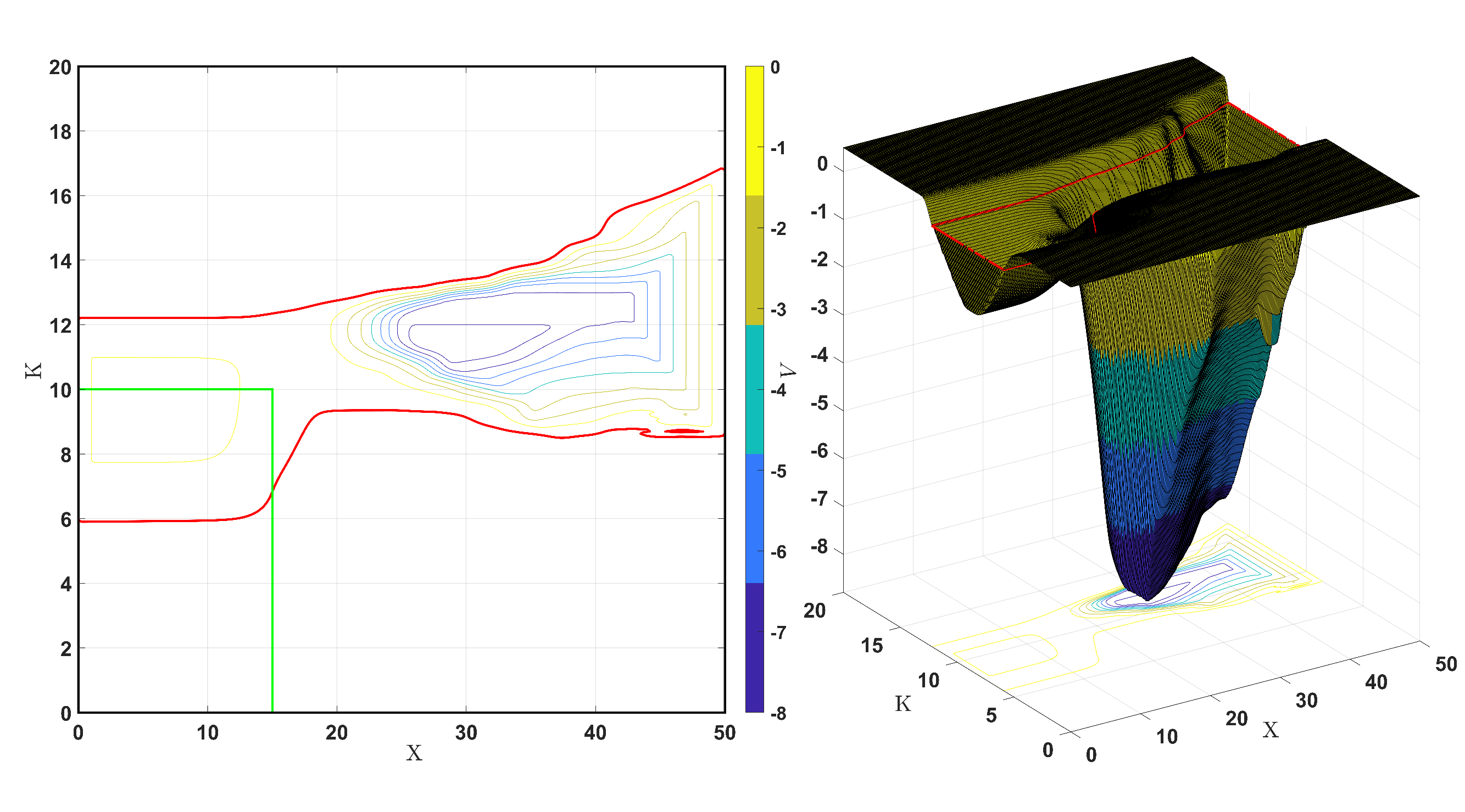}
    \caption{BRS under periodic inflow with jump disturbances.}
    \label{figrechability_sin}
\end{figure}

State trajectories (Figure~\ref{fig:state_control_sinus}, left) exhibit oscillatory patterns synchronized with inflow cycles, while control actions (right) adapt smoothly despite abrupt inflow jumps, avoiding destabilizing behavior.

\begin{figure}[!htpb]
    \centering
    \includegraphics[width=0.49\linewidth]{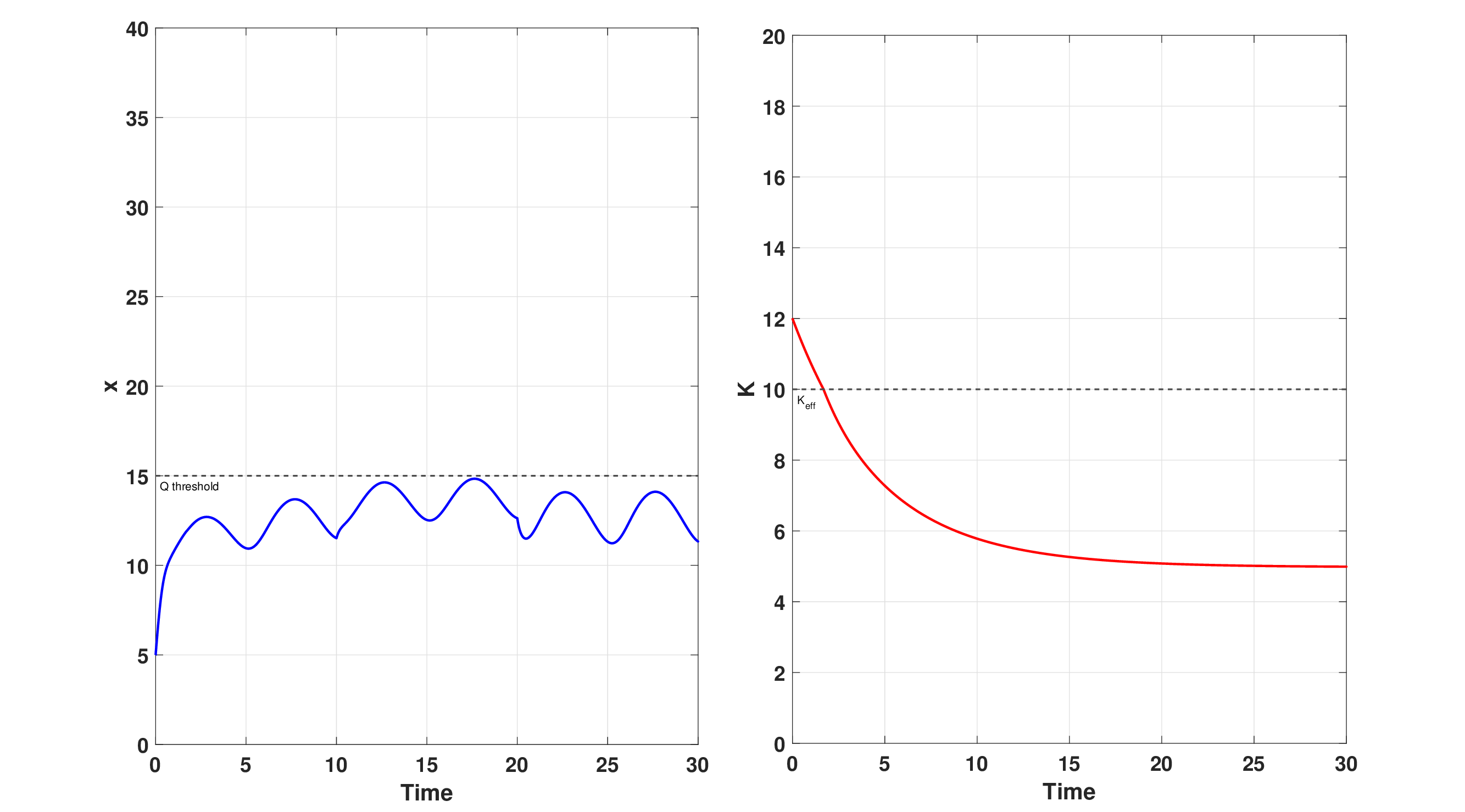}
    \includegraphics[width=0.49\linewidth]{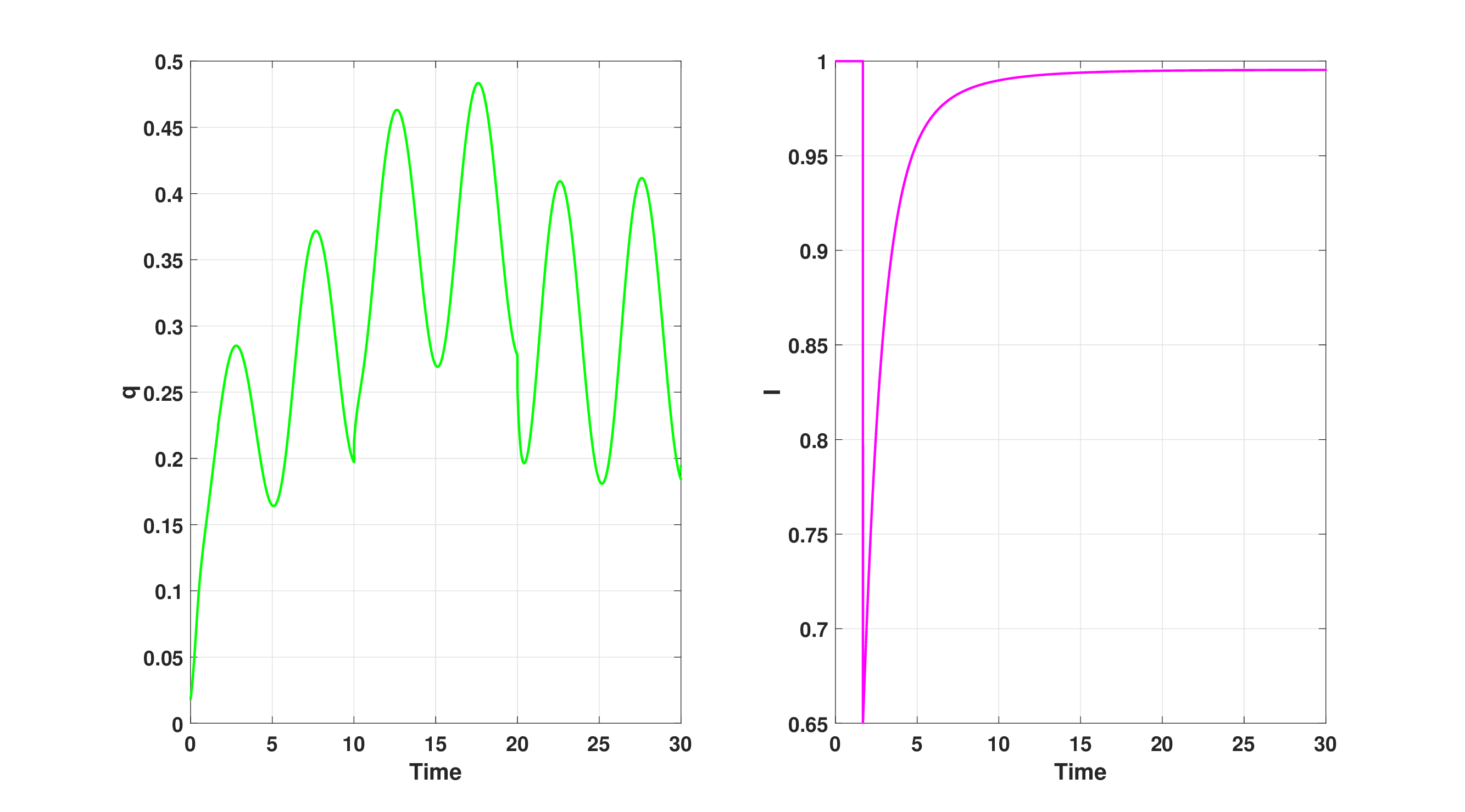}
    \caption{Left: State evolution under sinusoidal inflow. Right: Control response.}
    \label{fig:state_control_sinus}
\end{figure}

Figure~\ref{fig:energy_sinus} confirms that energy output remains monotonic despite nonstationary inflow, showcasing the robustness of the control synthesis.

\begin{figure}[!htpb]
    \centering
    \includegraphics[width=0.7\linewidth]{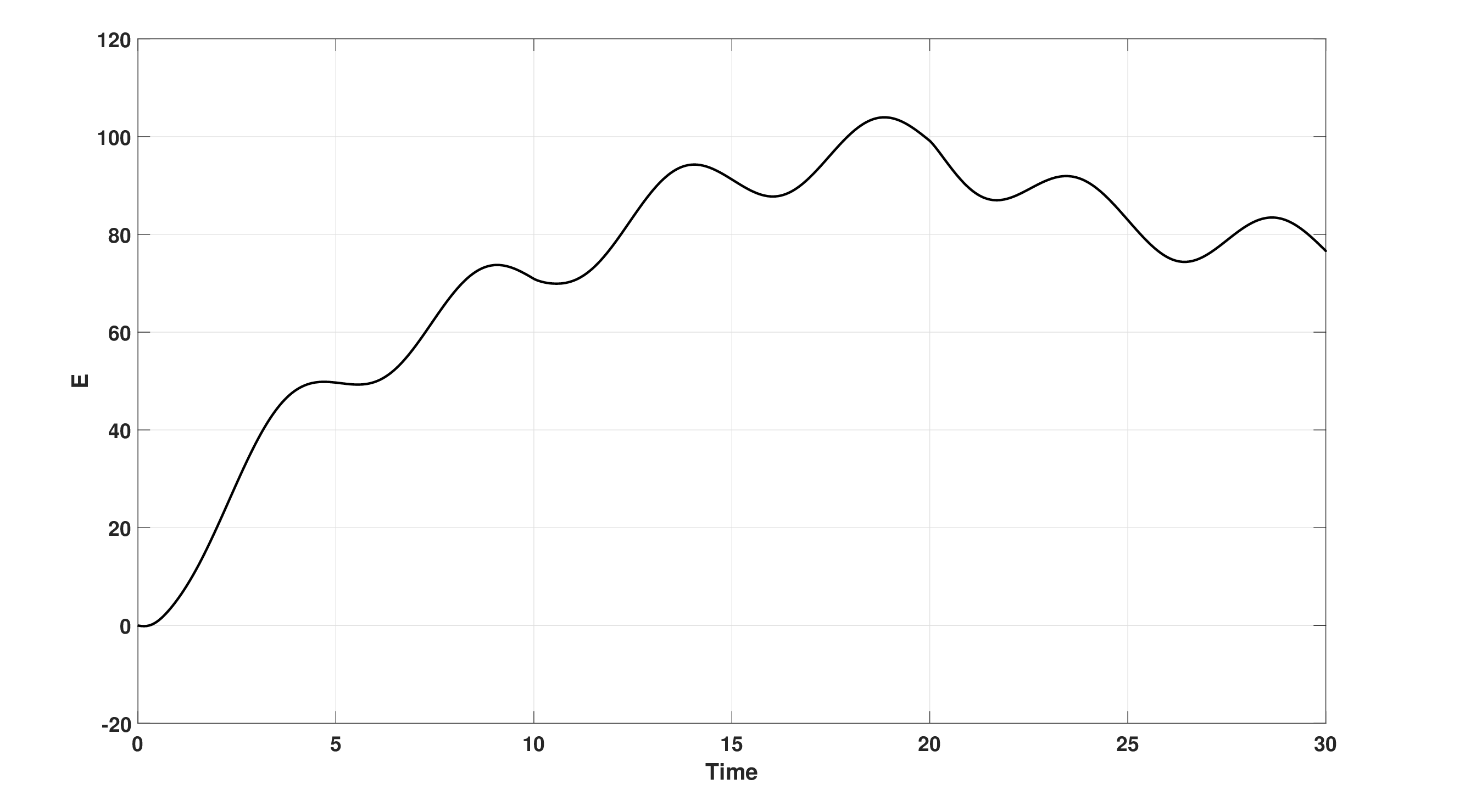}
    \caption{Energy accumulation under periodic inflow with shocks.}
    \label{fig:energy_sinus}
\end{figure}

Scenario 3 underscores the scalability of the HJ framework to highly nonstationary environments. Despite severe inflow irregularities, the controller ensures state constraint compliance and steady energy growth, albeit with reduced feasibility margins (BRS shrinkage). This highlights a critical insight: while viability can be preserved, performance guarantees (e.g., energy efficiency) degrade gracefully rather than catastrophically—a key advantage of adopting a reachability-based control approach.

\vspace{0.8em}

\paragraph{Summary of Scenario Analysis.}
Across all scenarios, the following structural trends emerge:
\begin{itemize}
    \item \textbf{Uncertainty reduces feasibility:} BRS contraction correlates with inflow variability, consistent with robust reachability theory.
    \item \textbf{Inflow timing matters:} For equal cumulative inflow, early availability accelerates energy production and lowers control stress.
    \item \textbf{Controller robustness:} Optimal controls remain well-behaved even under discontinuous inflow patterns, avoiding reactive instability.
\end{itemize}
These observations validate the HJ-based framework as a mathematically sound and practically robust tool for long-term planning in Waste-to-Energy systems.

\section{Conclusion}

This paper presented a mathematically rigorous framework for robust reachability analysis in Waste-to-Energy (WtE) systems operating under inflow uncertainty. The problem was cast as a zero-sum differential game between the control policy and an adversarial disturbance, leading to a Hamilton--Jacobi (HJ) formulation. The backward reachable set (BRS), which characterizes all states guaranteeing long-term viability, was expressed as the zero sublevel set of a robust value function. We established that this value function is the unique viscosity solution of a constrained Hamilton--Jacobi--Bellman (HJB) equation, providing formal safety guarantees under worst-case uncertainty.

Numerical experiments using a monotone level-set solver based on the Local Lax--Friedrichs scheme confirmed the convergence of the proposed method and its effectiveness across diverse inflow scenarios. Results highlighted a key structural insight: increasing uncertainty induces a systematic contraction of the BRS, thereby reducing operational flexibility. Nonetheless, the derived control strategies successfully enforced viability constraints and ensured convergence to the target set in all tested cases, demonstrating both robustness and practical applicability.

Beyond WtE applications, this work establishes a general viability-based paradigm for safety-critical and resource-constrained systems where uncertainty management is essential. Future research will address three directions: (i) incorporating stochastic and hybrid uncertainty models for more realistic representations, (ii) developing scalable decomposition algorithms to mitigate the curse of dimensionality, and (iii) integrating multi-objective criteria—such as economic performance—into the HJ reachability framework. These extensions aim to bridge theoretical rigor with real-world decision-making, advancing the role of Hamilton--Jacobi methods in sustainable energy and environmental system design.

\end{document}